\newcommand{\C}{{\mathbb C}}
\newcommand{\R}{{\mathbb R}}
\newcommand{\N}{{\mathbb N}}
\newcommand{\K}{{\mathbb K}}
\newcommand{\n}{{\mathfrak n}}
\newcommand{\g}{{\mathfrak g}}
\newcommand{\h}{{\mathfrak h}}
\newcommand\calN{{\mathcal{N}}}
\newcommand\calO{{\mathcal{O}}}
\newcommand\calL{{\mathcal{L}}}
\newtheorem{theorem}{Theorem}[section]
\newtheorem{proposition}[theorem]{Proposition}
\theoremstyle{definition}
\theoremstyle{remark}
\newtheorem{remark}[theorem]{Remark}
\numberwithin{equation}{section}
\begin{document}

\title[Nash-Moser Theorem and rigidity of nilpotent  Lie algebras]{The Nash-Moser Theorem of Hamilton \\
and rigidity of finite dimensional \\ nilpotent Lie algebras }
\author{Alfredo Brega}
\address{CIEM-CONICET, FAMAF-Universidad Nacional de C\'ordoba}
\email{brega@famaf.unc.edu.ar}
\author{Leandro Cagliero}
\address{CIEM-CONICET, FAMAF-Universidad Nacional de C\'ordoba}
\email{cagliero@famaf.unc.edu.ar}
\author{Augusto Chaves Ochoa}
\address{CIEM-CONICET, FAMAF-Universidad Nacional de C\'ordoba}
\email{aeco03@yahoo.com}
\thanks{Partially supported by SECyT-UNC, FONCyT and CONICET grants}

\begin{abstract}
We apply the Nash-Moser theorem for exact sequences of R. Hamilton
to the context of deformations of  Lie algebras and we discuss some aspects
of the scope of this theorem in connection with the polynomial ideal associated
to the variety of nilpotent Lie algebras. This allows us to introduce the space
$H_{k-nil}^2(\g,\g)$, and certain subspaces of it, that provide fine information
about the deformations of $\g$ in the variety of $k$-step nilpotent Lie algebras.

Then we focus on degenerations and rigidity in the variety of $k$-step nilpotent
Lie algebras of dimension $n$ with $n\le7$ and, in particular,
we obtain rigid Lie algebras and rigid curves in the variety of
3-step nilpotent Lie algebras of dimension 7. We also recover some known results
and point out a possible error in a published article related to this subject.
\end{abstract}

\maketitle

\section{Introduction}
In this paper we will assume that all Lie algebras and representations are finite dimensional,
and mostly over $\R$. Here we apply a finite dimensional version of the Nash-Moser theorem
for exact sequences of R. Hamilton to the context of deformations in the variety of nilpotent
Lie algebras. Our main results are described below.

\subsection{The Nash-Moser theorem of R. Hamilton}
A very well known general principle of deformation theory says that given
an (algebraic) structure $\mu$, then
\begin{equation}\label{principle}
H^2(\mu,\mu)=0\;\Rightarrow\; \mu\text{ is rigid, but the converse is not true in general.}
\end{equation}
By definition, an algebraic structure $\mu$ on a $\K$-vector space $V$ is rigid if the
$GL(V)$-orbit of $\mu$, $\calO(\mu)$, is a Zariski open set in the algebraic variety
of all such algebraic structures.

\smallskip

Roughly speaking, when $\K=\R$ or $\C$, an algebraic structure $\mu$ is rigid if
every small perturbation of $\mu$ is isomorphic to $\mu$. More precisely,
it is known that  $\calO(\mu)$ is open in the metric topology if and only if it is
open in the Zariski topology (see \cite[Proposition 17.1]{NR3}, see also
\cite[Proposition 2]{GK}).
As a consequence of this, the principle \eqref{principle} follows from a particular
instance of the Nash-Moser theorem for exact sequences of R. Hamilton as we recall below.
This theorem is stated in \cite{H1} in terms of tame Fr\'echet spaces and it is related
to the inverse function theorem of Nash and Moser \cite{Na,Mo}.
Here, we state
a finite dimensional version of the Nash-Moser theorem
of R. Hamilton.

\begin{theorem}\label{mainthm}
Let $U\subset\R^{m}$ and $V\subset\R^{n}$ be open sets and let
\begin{equation}\label{Ex.Seq.Cinf}
 U \xrightarrow{F}V \xrightarrow{G} \R^k
\end{equation}
be a sequence of $C^{\infty}$ functions such that $G\circ F$ is constant, say $G(F(x))=0$ for all $x\in U$.
Fix $a\in U$
and let $b=F(a)\in V$. If the linear sequence
\begin{equation}\label{Ex.Seq.}
\xymatrix {\R^{m} \ar[r]^{dF|_a} &\R^{n} \ar[r]^{dG|_b} & \R^{k}}
\end{equation}
is exact (in the usual algebraic sense),
there is an open neighborhood $W\subset V\subset\R^{n}$ of $b$ and a $C^{\infty}$ map
$H:W\rightarrow U\subset\R^{m}$ such that $F(H(y))=y$, for all $y\in W$ satisfying $G(y)=0$. That is
\begin{equation*}
\xymatrix{
U    \ar[r]^F       &    V  \ar[r]^G  &    \R^k  &   F\circ H = Id\quad\text{in}\quad\{y\in W:G(y)=0\}.\\
                    &  W \ar[lu]^H\ar@{^{(}->}[u]
}
 \end{equation*}
\end{theorem}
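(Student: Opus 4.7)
The plan is to reduce the statement to the classical inverse function theorem by means of a splitting of $\R^m$ and $\R^n$ dictated by the exactness hypothesis. Let $r = \dim\im(dF|_a) = \dim\ker(dG|_b)$, the common value ensured by exactness of \eqref{Ex.Seq.}. First, I would fix linear complements $\R^m = \ker(dF|_a) \oplus K$ and $\R^n = \ker(dG|_b) \oplus L$, so that $dF|_a|_K : K \to \im(dF|_a) = \ker(dG|_b)$ is an isomorphism (this is where exactness enters) and $dG|_b|_L : L \to \R^k$ is injective.

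Next, I would define the smooth map $\Phi : K \times L \to \R^n$ near $(0, 0)$ by $\Phi(x, \ell) := F(a + x) + \ell$. By the chain rule, $d\Phi|_{(0,0)}(x, \ell) = dF|_a(x) + \ell$, which is an isomorphism $K \oplus L \to \ker(dG|_b) \oplus L = \R^n$. The classical inverse function theorem therefore produces an open neighborhood $W \subset V$ of $b$ and smooth functions $y \mapsto x(y) \in K$ and $y \mapsto \ell(y) \in L$ satisfying $\Phi(x(y), \ell(y)) = y$ on $W$. Setting $H(y) := a + x(y)$ yields a smooth map $W \to U$ with $F(H(y)) = y - \ell(y)$, so the whole statement reduces to showing that $\ell(y) = 0$ whenever $G(y) = 0$.

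For this remaining step, consider $g(x, \ell) := G(\Phi(x, \ell)) = G(F(a + x) + \ell)$. The hypothesis $G \circ F = 0$ gives $g(x, 0) = 0$ for all small $x$, while the relation $dG|_b \circ dF|_a = 0$ implies that the differential $dg|_{(0,0)}$ is simply $(x, \ell) \mapsto dG|_b(\ell)$, so the partial derivative in $\ell$ at the origin is the injective map $dG|_b|_L$. Composing $g$ with a linear projection $P : \R^k \to dG|_b(L)$ and retaining the $K$-coordinate yields $\tilde g(x, \ell) := (x, P(g(x, \ell)))$, whose differential at $(0, 0)$ is invertible; by the inverse function theorem $\tilde g$ is a local diffeomorphism, and since $\tilde g(x, 0) = (x, 0)$ identically, the equation $g(x, \ell) = 0$ forces $\ell = 0$ locally. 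Applied with $(x(y), \ell(y))$, which lies in the required small neighborhood after shrinking $W$ if necessary, this gives $\ell(y) = 0$ whenever $G(y) = 0$, as needed.

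The main obstacle, such as it is in finite dimensions, is exploiting the full exactness of \eqref{Ex.Seq.} rather than the weaker relation $G \circ F = 0$: exactness is precisely what makes the complements align so that $dG|_b|_L$ is genuinely injective, which in turn is what makes the final inverse-function step run. Without exactness one would only have $\im(dF|_a) \subseteq \ker(dG|_b)$, the two splittings might fail to match, and the concluding argument would collapse.
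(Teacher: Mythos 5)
Your argument is correct and complete. Note that the paper does not actually contain its own proof of Theorem \ref{mainthm}: it defers to Serre's proof of the lemma attributed to Weil and to the proof of Proposition 4.3 of \cite{CSS}. Your proof --- splitting $\R^m=\ker(dF|_a)\oplus K$ and $\R^n=\ker(dG|_b)\oplus L$, inverting $\Phi(x,\ell)=F(a+x)+\ell$ by the inverse function theorem (this is exactly where $\im(dF|_a)=\ker(dG|_b)$ is used), and then running a second inverse-function-theorem argument on $\tilde g(x,\ell)=(x,P(G(\Phi(x,\ell))))$ to show that $G(y)=0$ forces the $L$-component $\ell(y)$ to vanish --- is precisely the classical Weil-type argument that those references give, so it is the ``same approach'' in substance and serves as a self-contained proof the paper omits. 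The one point worth spelling out is in the last step: take the neighborhood on which $\tilde g$ is injective to be of product form $B_K\times B_L$ (or at least star-shaped in the $\ell$-direction), so that $(x,\ell)$ and $(x,0)$ both lie in it and $\tilde g(x,\ell)=(x,0)=\tilde g(x,0)$ really does yield $\ell=0$; you allude to this with ``after shrinking $W$ if necessary,'' and it is easily arranged.
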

In some sense, this theorem says that the exactness of \eqref{Ex.Seq.} implies a ``local splitting'' of  \eqref{Ex.Seq.Cinf}.

\smallskip

We could not find  this statement in the literature. Also, and remarkably to us,
we do not see this theorem frequently cited in articles dealing with algebraic
structures in the context of \eqref{principle}.

Recently, I. Struchiner pointed out to us that a close result appears in Serre's book
(see in \cite[pp. 89-90]{S} the result attributed to Weil). The statement of this
result is a bit weaker than Theorem \ref{mainthm}, however,
as the referee indicated, Serre's proof of his statement in fact proves the statement of Theorem  \ref{mainthm}.

On the other hand, Hamilton's proof of his version of Theorem \ref{mainthm},
in the context of tame Fr\'echet spaces and tame smooth functions,  is considerably involved.
We came across Hamilton's paper because it is cited in the survey \cite{CSS} of
M. Crainic, F. Sch\"atz and I. Struchiner where the authors address,
in a unified way, several well known problems about rigidity
and stability of Lie algebras and morphisms based on the principle \eqref{principle}.
To address these problems, the authors state and prove some
stability results (see Propositions 4.3, 4.4 and 4.5 in \cite{CSS})
that are phrased in terms of Kuranishi models and non-degenerate zeros of equivariant
sections of vector bundles with group actions. Although the statement of Proposition 4.3 in \cite{CSS}
is more involved than that of Theorem \ref{mainthm}, as in Serre's case,
the proof of Proposition 4.3 of \cite{CSS} provides a proof of Theorem \ref{mainthm}.

We think that both, the Nash-Moser theorem of Hamilton for tame Fr\'echet spaces and the results
about Kuranishi models and equivariant sections of \cite{CSS}, are much deeper than
what is needed to address some problems about rigidity and stability in a finite dimensional context.

We hope that the applications of Theorem \ref{mainthm} presented in this paper  will
 make this classic and important result accessible to more people
working on deformations of algebraic structures.

\subsection{Degenerations and rigidity of nilpotent Lie algebras}
Theorem \ref{mainthm} can be applied to the study of the deformations of any algebraic structure
$\mu$ on a finite dimensional $\R$-vector space $V$ and, in particular, when $\mu$ defines
a Lie algebra structure on $V$.

Let  $\calL_n$ (resp. $\calN_{n}$) be the algebraic variety of all Lie algebra
(resp. nilpotent Lie algebra)  structures $\mu$ on an $n$-dimensional vector space $\g$. Let $\{\g^{i}\}_{i\ge0}$
and $\{\g^{(i)}\}_{i\ge0}$ denote, respectively, the descending central series and the derived
series of $\g$ (i.e. $\g^0=\g^{(0)}=\g$, $g^{i}=[\g^{i-1},\g]$ and $\g^{(i)}=[\g^{(i-1)},\g^{(i-1)}]$)
and let $\calN_{n,k}=\{\g\in\calL_n:\g^k=0\}$ and $\mathcal{S}_{n,k}=\{\g\in\calL_n:\g^{(k)}=0\}$ be, respectively, the subvariety of all (at most)
$k$-step nilpotent and solvable Lie algebras.

\vspace{0.1cm}

Theorem \ref{mainthm} applies to the study of the deformations in a given variety.
For instance,  if we want to study the
deformations of $\mu$ in $\calL_n$ we consider the following sequence of $C^{\infty}$
functions,
\begin{equation}\label{eq.FJ}
GL(\g)\xrightarrow{F}\Lambda^2\g^* \otimes\g \xrightarrow{G=J} \Lambda^3\g^*\otimes \g
\end{equation}
where $F$ is the action of $GL(\g)$ on $\mu$ and $J$ is the Jacobi operator, that is
\begin{align*}
F(g) (x, y)    &= g(\mu (g^{-1} x ,\, g^{-1}  y)), \; &&g\in GL(\g);\\
J(\sigma)(x,y,z)& = \displaystyle\sum_{cyclic}\sigma(\sigma(x,y),z),\; &&\sigma\in
\Lambda^2\g^*\otimes\g \quad \text{and} \quad x,y,z\in\g.
\end{align*}

It turns out that the following portion of the Chevalley-Eilenberg complex for
the adjoint cohomology $\g$,
\begin{equation*}
\g^* \otimes\g\xrightarrow{d_{\mu}^1}\Lambda^2\g^* \otimes\g
\xrightarrow{d_{\mu}^2} \Lambda^3\g^*\otimes \g
\end{equation*}
is the sequence
\begin{equation}\label{eq.dFdJ}
T_I (GL(\g)) \xrightarrow{dF|_I}T_{\mu} (\Lambda^2\g^*\otimes \g)
\xrightarrow{dJ|_{\mu}} T_0 ( \Lambda^3\g^*\otimes \g),
\end{equation}
(see \eqref{Ex.Seq.}) corresponding to \eqref{eq.FJ}.
Therefore, if $H^2(\mu,\mu) = 0$, we obtain from Theorem \ref{mainthm},  that
there exist an open neighborhood $W$ of $\mu$ in $\Lambda^2\g^*\otimes\g$
and a $C^{\infty}$ map $H:W\rightarrow GL(\g)$ such that
\begin{equation}
H(\lambda)\cdot\mu = F(H(\lambda))=\lambda,
\end{equation}
for every $\lambda\in W\cap\{J=0\} = W\cap\calL_n$.
Hence $(\g, \mu)$ is rigid in $\calL_n$
(see also Theorem 5.3 of \cite{CSS}).

\vspace{0.1cm}

On the other hand, if we are interested in the deformations of $\mu$ in $\calN_{n,k}$
we can apply Theorem \ref{mainthm} considering the following $C^{\infty}$ functions,
\begin{equation}\label{}
GL(\g)\xrightarrow{\;\;F\;\;}\Lambda^2\g^* \otimes\g \xrightarrow{\quad G=J\oplus N_k\quad } \Lambda^3\g^*\!\otimes\! \g\;\oplus\;(\g^*)^{\otimes (k+1)}\!\otimes\!\g,
\end{equation}
where $F$ is as above and $N_k:\Lambda^2\g^*\otimes\g \rightarrow (\g^*)^{\otimes(k+1)}\otimes\g$
is given by,
\begin{equation}\label{N_k}
N_k(\sigma)(x_1,\dots,x_{k+1})=\sigma(\dots\sigma(\sigma(x_1,x_2),x_3),\dots,x_{k+1})\; \; \text{for}
\; \; k\ge 1.
\end{equation}
This sequence allows us to introduce the cohomology space $H_{k-nil}^2(\mu,\mu)$
(see \eqref{eq.k-nil}) obtaining that $(\g,\mu)$ is rigid in  $\calN_{n,k}$
whenever $H_{k-nil}^2(\mu,\mu)=0$.

\vspace{0.1cm}

Theorem \ref{mainthm} could be applied in a more subtle way.
Note that $J$ and $N_k$ give rise, when written in coordinates,
to polynomials of degree 2 and $k$ respectively. Let $I_{n,k}$ be the ideal generated
by these polynomials.
It turns out that, depending on $n$ and $k$, there might be polynomials of degree
less than $k$ in the radical $\sqrt{I_{n,k}}$ that are not in
$I_{n,k}$. In this case, if $P$ is such a polynomial,
it can be used as (part of) the function $G$ in Theorem \ref{mainthm}
to describe more precisely $\calN_{n,k}$ which, in turn,
might help to recognize rigid $\mu$'s as points with ``zero cohomology''.
For example, this happens for the ideal of $I_{7,6}$
(which defines $\calN_{7}=\calN_{7,6}$).
Indeed, the polynomial identity $[\g^1,\g^3]=0$, of degree $5$, holds for every
nilpotent Lie algebra $\g$ of dimension 7.
This is discussed in \S\ref{sec.radical} and, in \S\ref{sec.curves in dim7},
we use the identity $[\g^1,\g^3]=0$ and Theorem \ref{mainthm}
to recover the result that states that the only three (two over $\C$) curves in
$\calN_{7}$ are rigid curves. As a byproduct we obtain a curve, consisting of
solvable Lie algebras, that is rigid in the variety of
Lie algebras satisfying  $[\g^1,\g^3]=0$.


The paper also includes an analysis of all rigid Lie algebras in $\calN_{n,k}$ for $n=5,6$.
Some of these results provide examples showing that the converse part of the principle \eqref{principle} is false in $\calN_{5,3}$,  $\calN_{6,3}$,  and $\calN_{6,4}$
(in analogy with the famous example of Richardson \cite{R} in $\calL_{18}$).

Finally,  in \S\ref{sec.Nil_7-3}, we discuss degenerations and rigidity in $\calN_{7,3}$.
As far as we know, the results of this subsection are new. We obtain three rigid Lie
algebras and two (one over $\C$) rigid curves in $\calN_{7,3}$. We also present
degenerations for all Lie algebras $\g\in\calN_{7,3}$ with $\dim H_{3-nil}^2(\g,\g)=1$.
In particular,
we provide a non-trivial deformation of a Lie algebra in $\calN_{7,3}$ which is
claimed to be rigid in \cite{GR}.

\section{Rigidity in the variety $k$-step nilpotent Lie algebras}\label{sec.k-nilp}
Recall from the introduction that
\[
 \calN_{n,k}      =\{\g\in\calL_n:\g^k=0\}
\]
is the subvariety of $\calL_{n}$ of all (at most) $k$-step nilpotent Lie algebras.
Now, we will use Theorem \ref{mainthm} to discuss
rigidity in  $\calN_{n,k}$.

Let $\g$ be a vector space over $\R$ of dimension $n$. For $k\ge 1$ consider the
maps
\begin{equation}\label{eq.N_k}
N_k\in \text{Hom}(\Lambda^2\g^*\otimes \g, \,\, (\g^*)^{\otimes (k+1)}\otimes\g),
\end{equation}
defined inductively as follows,
\begin{align*}
N_1(\mu)&(x_1,x_2)=\mu(x_1,x_2),\\
N_{k}(\mu)&(x_1, \dots, x_k,x_{k+1})=
\mu\big(N_{k-1}(\mu)(x_1, \dots, x_k), x_{k+1}\big),
\end{align*}
where $\mu\in\Lambda^2\g^*\otimes \g$.
It is clear that
\begin{equation}\label{eq.N_{n,k}}
\calN_{n,k}=\{\mu\in\Lambda^2\g^*\otimes \g: \, \, J(\mu)=0 \, \, \text{and} \, \, N_k(\mu)=0\}.
\end{equation}
It is not difficult to see that $dN_k|_{\mu}:\Lambda^2\g^*\otimes\g\rightarrow (\g^*)^{\otimes (k+1)}\otimes\g$
is given by,
\begin{align*}
dN_1|_{\mu}(\sigma)&(x_1,x_2)=\sigma(x_1,x_2),\\
dN_2|_{\mu}(\sigma)&(x_1,x_2,x_3)=\mu(\sigma(x_1,x_2),x_3)+\sigma(\mu(x_1,x_2),x_3),\\
dN_3|_{\mu}(\sigma)&(x_1,x_2,x_3,x_4)=\mu(\mu(\sigma(x_1,x_2),x_3),x_4)+\mu(\sigma(\mu(x_1,x_2),x_3),x_4) \\
 &\hspace{6.5cm} + \sigma(\mu(\mu(x_1,x_2),x_3),x_4),
\end{align*}
and so on for $k\ge 4$.

\vspace{0.1cm}

Since $\text{Im}(d_{\mu}^1)\subset\text{Ker}(d_{\mu}^2)\cap\text{Ker}(dN_k|_{\mu})$,
in analogy with the definition of the second cohomology space $H^2(\g,\g)$ we define
\begin{equation}\label{eq.k-nil}
H_{k-nil}^2(\g,\g)=\frac{\text{Ker}(d_{\mu}^2)\cap\text{Ker}(dN_k|_{\mu})}{\text{Im}(d_{\mu}^1)}
\end{equation}
for any  $k$-step nilpotent Lie algebra $(\g, \mu)$. In the following theorem we apply
Theorem \ref{mainthm} to obtain a rigidity result in line with the principle \eqref{principle}.

\begin{theorem}\label{k-rigidity}
Let $(\g, \mu)$ be a $k$-step nilpotent Lie algebra over $\R$.
If $H_{k-nil}^{2}(\g,\g) = 0$, then
there is a neighborhood $W$ of $\mu$ in $\Lambda^2\g^*\otimes\g$ and a smooth map
$H: W \rightarrow GL(\g)$ such that $H(\lambda)\cdot \mu = \lambda$ for every
$\lambda\in W\cap\calN_{n,k}$. Hence, $(\g, \mu)$ is rigid in $\calN_{n,k}$.
\end{theorem}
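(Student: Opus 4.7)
The plan is to apply Theorem~\ref{mainthm} directly to the sequence of smooth maps
\begin{equation*}
GL(\g)\xrightarrow{\;F\;}\Lambda^2\g^*\otimes\g\xrightarrow{\;G\;}\Lambda^3\g^*\otimes\g\;\oplus\;(\g^*)^{\otimes(k+1)}\otimes\g,\qquad G=J\oplus N_k,
\end{equation*}
introduced in the paragraph containing \eqref{N_k}, with base points $a=I\in GL(\g)$ and $b=F(I)=\mu$. First I would verify that $G\circ F$ is identically zero: since $F(g)=g\cdot\mu$ is Lie-algebra isomorphic to $\mu$ for every $g\in GL(\g)$, and both the Jacobi identity and $k$-step nilpotency are isomorphism-invariant, this is immediate from \eqref{eq.N_{n,k}}.

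Next I would identify the differentials of the linearized sequence
\begin{equation*}
T_I GL(\g)\xrightarrow{\;dF|_I\;}T_\mu(\Lambda^2\g^*\otimes\g)\xrightarrow{\;dJ|_\mu\,\oplus\, dN_k|_\mu\;}T_0(\Lambda^3\g^*\otimes\g)\oplus T_0((\g^*)^{\otimes(k+1)}\otimes\g).
\end{equation*}
As recalled in the introduction around \eqref{eq.dFdJ}, under the canonical identifications $T_IGL(\g)\simeq\g^*\otimes\g$ and $T_\mu(\Lambda^2\g^*\otimes\g)\simeq\Lambda^2\g^*\otimes\g$, the map $dF|_I$ coincides with the Chevalley-Eilenberg differential $d_\mu^1$ and $dJ|_\mu$ with $d_\mu^2$, while $dN_k|_\mu$ is the map explicitly computed just before the statement. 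Hence $\im(dF|_I)=\im(d_\mu^1)$ and $\ker(dG|_\mu)=\ker(d_\mu^2)\cap\ker(dN_k|_\mu)$, so by the definition \eqref{eq.k-nil} the linearized sequence is exact at the middle term precisely when $H_{k-nil}^2(\g,\g)=0$, which is the hypothesis.

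With the exactness of the linear sequence in hand, Theorem~\ref{mainthm} produces an open neighborhood $W$ of $\mu$ in $\Lambda^2\g^*\otimes\g$ together with a $C^\infty$ map $H\colon W\to GL(\g)$ such that $F(H(\lambda))=\lambda$ for every $\lambda\in W$ with $G(\lambda)=0$. Because $G(\lambda)=0$ characterizes membership in $\calN_{n,k}$ by \eqref{eq.N_{n,k}}, this translates to $H(\lambda)\cdot\mu=\lambda$ for every $\lambda\in W\cap\calN_{n,k}$, exhibiting a full neighborhood of $\mu$ in $\calN_{n,k}$ inside the $GL(\g)$-orbit of $\mu$ and hence establishing its rigidity in $\calN_{n,k}$. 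I expect the only mildly delicate point to be the unwinding of the canonical identifications needed to match $dG|_\mu$ with $d_\mu^2\oplus dN_k|_\mu$; but this is a direct computation from the inductive formulas for $N_k$ and poses no real obstacle.
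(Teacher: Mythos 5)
Your proposal is correct and follows essentially the same route as the paper's own proof: apply Theorem~\ref{mainthm} to the complex $GL(\g)\xrightarrow{F}\Lambda^2\g^*\otimes\g\xrightarrow{J\oplus N_k}(\Lambda^3\g^*\otimes\g)\oplus((\g^*)^{\otimes(k+1)}\otimes\g)$ at $a=I$, $b=\mu$, identify $dF|_I=d_\mu^1$ and $dG|_\mu=d_\mu^2\oplus dN_k|_\mu$, and note that exactness of the linearized sequence is exactly the vanishing of $H^2_{k-nil}(\g,\g)$. No substantive differences from the paper's argument.
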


\begin{proof} Let $(\g, \mu)$ be a $k$-step nilpotent Lie algebra over $\R$ and let $F$ and $J$ be
as in \eqref{eq.FJ}. Consider the following  3-term $C^{\infty}$-chain complex
\begin{equation}\label{eq.FG}
GL(\g)\xrightarrow{\;\;F\;\;}\Lambda^2\g^* \otimes\g \xrightarrow{\quad G=J\oplus N_k\quad }\Big(\Lambda^3\g^*\otimes\g\Big)
\oplus\Big((\g^*)^{\otimes (k+1)}\otimes\g\Big).
\end{equation}
Since $\mu$ is a $k$-step nilpotent Lie algebra we have $G(F(g))=0$ for $g\in GL(\g)$.
Take $a=I\in GL(\g)$ and $b=F(I)=\mu\in\Lambda^2\g^*\otimes\g$ in Theorem \ref{mainthm}.
Since
\begin{equation*}\label{eq.dFdG.1}
dF|_I =d_{\mu}^1 \,\quad \, \text{and} \,\quad \, dG|_{\mu}=d_{\mu}^2 \oplus dN_k|_{\mu},
\end{equation*}
it follows that the sequence
\begin{equation}\label{eq.dFdG.2}
\g^*\otimes \g \xrightarrow{dF|_I}T_{\mu} (\Lambda^2\g^*\otimes \g)
\xrightarrow{dG|_{\mu}} T_0 \Big(\Lambda^3\g^*\otimes \g\Big)\oplus
T_0 \Big((\g^*)^{\otimes (k+1)}\otimes\g\Big),
\end{equation}
is exact if and only if $H_{k-nil}^2(\g,\g) = 0$. Hence, from Theorem \ref{mainthm}, it follows
that if $H_{k-nil}^2(\g,\g) = 0$ there exist a neighborhood $W$ of $\mu$ in $\Lambda^2\g^*\otimes\g$
and a $C^{\infty}$ function $H:W\rightarrow GL(\g)$ such that
\begin{equation}
H(\lambda)\cdot\mu = F(H(\lambda))=\lambda,
\end{equation}
for every $\lambda\in W\cap\{J\oplus N_k=0\} = W\cap\calN_{n,k}$ (see \eqref{eq.N_{n,k}}).
Hence $(\g, \mu)$ is rigid in $\calN_{n,k}$, as we wanted to proof.
\end{proof}

It is natural to ask how to extend  \eqref{eq.dFdG.2} to an unbounded
chain complex. Among other things, this would facilitate the computation of
$H_{k-nil}^2(\g,\g)$ by using homological algebra tools.
We have been able to do this for $2$-step nilpotent Lie algebras and this work
is still in progress.
We point out that M. Vergne \cite{V} introduced, as derived functors,
cohomology spaces  $N^p_{k}(\mu,\mu)$ (for $k$-step nilpotent Lie algebras) that can be viewed
as certain subspaces of the regular cohomology spaces
$H^p(\mu,\mu)$. For $p=2$, this subspace, in principle,
is not quite the same as $H_{k-nil}^2(\mu,\mu)$ as it consists of classes that contain a
representative satisfying certain property (which depends on the representative).
It would be interesting to understand better the connection between $N^2_{k}(\mu,\mu)$ and $H_{k-nil}^2(\mu,\mu)$.

\section{The radical of the polynomial ideal defining  $\calN_{n,k}$}\label{sec.radical}
Let $\g$ be a fixed $n$-dimensional vector space over $\R$. For $\mu\in\Lambda^2\g^*\otimes \g$
we know that the coordinates of $J(\mu)$ and $N_k(\mu)$, when expressed in terms of a basis of $\g$, are, respectively, polynomials of degree 2 and $k$ in the structure constants of $\mu$, (the number of these polynomials and the number of variables depend on $n$). Let $I_{n,k}$
denote the ideal generated by these polynomials. In this subsection we will discuss briefly some algebraic properties of $I_{n,k}$ that depend on $n$. Some of these results are needed in \S\ref{sec.curves in dim7}.

\vspace{0.1cm}

For $k\ge 3$ consider the polynomial in $\mu\in\Lambda^2\g^*\otimes \g$ given by,
\begin{equation}\label{SN_k}
SN_k(\mu)(x_1,\dots,x_{k+1})=\mu\big(\mu(x_1,x_2),N_{k-2}(\mu)(x_3,\dots,x_{k+1})\big),
\end{equation}
and define
\[
\mathcal{SN}_{n,k}=\{\mu\in\Lambda^2\g^*\otimes \g: \, \,  J(\mu)=0 \, \, \, \text{and} \, \, \, SN_{k}(\mu)=0\}
\subset \mathcal{L}_n.
\]
It is clear that if $\mu\in\mathcal{SN}_{n,k}$ then $\mu$ defines a solvable Lie algebra.
More precisely we have,
\begin{equation}\label{eq.SN_{n,k}1}
 \mathcal{N}_{n,k}\subset\mathcal{SN}_{n,k}\subset
 \mathcal{S}_{n,\lceil \log_2(k-1)\rceil+1}\subset\mathcal{S}_{n,k-1}\subset\mathcal{L}_n.
\end{equation}
This follows since $\g^{(i)}\subset\g^{2^i-1}$ for $i\in\N$, and because
$N_{k}(\mu)=J(\mu)=0$ imply $SN_{k}(\mu)=0$.

\vspace{0.1cm}

The main goal of this subsection is to point out that
\begin{align}\label{eq.SN_{n,k}3}
\text{in general }& \mathcal{N}_{n,k+1}\not\subset\mathcal{SN}_{n,k}, \quad \text{ for }  k<n-1,\\\label{eq.SN_{n,k}2}
\text{but  }& \mathcal{N}_{n,k+1}\subset\mathcal{SN}_{n,k}, \quad \text{ for  certain }  k<n-1,
\end{align}
(observe that $\mathcal{N}_{n,k}=\mathcal{N}_{n,n-1}\subset\mathcal{SN}_{n,n-1}$ for $k\ge n$).
The inclusion \eqref{eq.SN_{n,k}2} is remarkable to us since it provides ``unexpected'' polynomials of degree
$k$ (those coming from $SN_k$) that vanish on $\mathcal{N}_{n,k+1}$. Moreover, it reveals some instances where the polynomial
ideal $I_{n,k+1}$ is not radical.

\vspace{0.1cm}

More precisely, we will show next that
\begin{align*}
\mathcal{N}_{5,4}\subset\mathcal{SN}_{5,3} && \mathcal{N}_{6,4}\subset\mathcal{SN}_{6,3} && \mathcal{N}_{7,4}\subset\mathcal{SN}_{7,3} && \mathcal{N}_{8,4}\not\subset\mathcal{SN}_{8,3} \\
                                               && \mathcal{N}_{6,5}\not\subset\mathcal{SN}_{6,4}  && \mathcal{N}_{7,5}\not\subset\mathcal{SN}_{7,4} && \mathcal{N}_{8,5}\not\subset\mathcal{SN}_{8,4}  \\
                                               &&                                        && \mathcal{N}_{7,6}\subset\mathcal{SN}_{7,5}  && \text{we don't know}
\end{align*}
and, in general,
\begin{equation}\label{eq.notin}
 \mathcal{N}_{2(k+1),k+1}\not\subset\mathcal{SN}_{2(k+1),k},\quad\text{for all $k\ge2$}.
\end{equation}
We notice that, once
$\mathcal{N}_{n_0,k+1}\not\subset\mathcal{SN}_{n_0,k}$ for some $n_0$ then
$\mathcal{N}_{n,k+1}\not\subset\mathcal{SN}_{n,k}$
for all $n\ge n_0$.
Therefore \eqref{eq.notin} implies that, given $k$, $\mathcal{N}_{n,k+1}\subset\mathcal{SN}_{n,k}$ occurs only for a finite number of $n$'s.

\subsection{Proof of $\mathcal{N}_{n,5}\not\subset\mathcal{SN}_{n,4}$ for $n\ge6$}
Let $\g$ be the Lie algebra denoted by $12346_E$ in \cite{Se} and by $\g_{6,14}$ in
\cite{CdGS}.
The structure table of $\g$ is the following,
\[
\g:\;[a,b] = c,\; [a,c] = d,\; [a,d] = e,\; [b,c] = e,\; [b,e] = f,\; [c,d] =- f.
\]
It is clear that $\g\in\mathcal{N}_{6,5}$
(in fact $\g$ is rigid in $\mathcal{N}_{6,5}$, see \S\ref{sec.dim6})
but $\g\not\in\mathcal{SN}_{6,4}$, since $\big[[a,b],[[a,b],a]\big]=f\ne0$.
Therefore, $\mathcal{N}_{6,5}\not\subset\mathcal{SN}_{6,4}$ and
this implies that $\mathcal{N}_{n,5}\not\subset\mathcal{SN}_{n,4}$ for all $n\ge6$.

\subsection{Proof of $\mathcal{N}_{2(m+1),m+1}\not\subset\mathcal{SN}_{2(m+1),m}$  for all $m\ge2$}
Let $\mathfrak{h}_m$ be the Heisenberg Lie algebra
$[x_i,y_i]=z$ ($i=1,\dots,m$) and let $D\in\text{Der}(\mathfrak{h}_m)$ be the derivation defined by
\begin{align*}
D(x_i)&=x_{i+1},\quad i=1,\dots,m-1;\\
D(y_i)&=-y_{i-1},\quad i=2,\dots,m.
\end{align*}
Then $\n=\R D\ltimes\h_m$ is an $(m+1)$-step nilpotent Lie algebra of dimension $2m+2$, hence $\n\in\mathcal{N}_{2m+2,m+1}$.
However, it is straightforward to verify that $\n\not\in\mathcal{SN}_{2m+2,m}$.
In particular, this  shows that
\begin{align*}
 \mathcal{N}_{n,4}&\not\subset\mathcal{SN}_{n,3}\quad\text{for $n=8$ and }\\
 \mathcal{N}_{n,6}&\not\subset\mathcal{SN}_{n,5}\quad\text{for $n=12$.}
\end{align*}

\medskip

\subsection{Proof of $\mathcal{N}_{n,4}\subset\mathcal{SN}_{n,3}$ for $n=5,6,7$ and $\mathcal{N}_{7,6}\subset\mathcal{SN}_{7,5}$}
Now we will show that
\begin{equation}\label{eq.NSN}
 \mathcal{N}_{n,4}\subset\mathcal{SN}_{n,3},\;n=5,6,7; \quad\text{ and }\quad
 \mathcal{N}_{7,6}\subset\mathcal{SN}_{7,5}.
\end{equation}

One way to prove \eqref{eq.NSN} is to use the classification of all nilpotent
Lie algebras of dimension $\le 7$ and verify \eqref{eq.NSN} case by case.
We do not know whether there is an elegant argument proving \eqref{eq.NSN}.

Another way to prove \eqref{eq.NSN}, which is more interesting to us in
the context of this paper, is to show that the radical of the
ideals $I_{n,4}$ and $I_{n,6}$ (defining  $\mathcal{N}_{n,4}$ and $\mathcal{N}_{n,6}$)
contain, respectively, the ideals defining $\mathcal{SN}_{n,3}$ and $\mathcal{SN}_{n,5}$,
for the given values of $n$. We discuss this approach next.

\vspace{0.1cm}

If $\g$ is a nilpotent Lie algebra of dimension $n$ with bracket given by $\mu$, then
we know (by Lie's Theorem) that $\g$ admits a basis $\{e_i\}$ such that
\[
 \mu(e_i,e_j)=\sum_{k=j+1}^{n}t_{i,j,k}\;e_k,\quad\text{ for $i<j$}.
\]
It is straightforward to see that the coordinates of $J(\mu)(e_{i_1},e_{i_2},e_{i_3})$ and
either $N_k(\mu)(e_{i_1},\dots,e_{i_{k+1}})$ or
$SN_k(\mu)(e_{i_1},\dots,e_{i_{k+1}})$,
are polynomials of degree 2 and $k$, respectively, in the variables $t_{i,j,k}$.
Now we consider separately the values of $n$ given in \eqref{eq.NSN}.

\smallskip

Case $n=5$. Here $I_{5,4}$ is generated by
\[
 P_1=t_{1, 2, 3}t_{3, 4, 5}\quad\text{and}\quad P_2=t_{1, 2, 4}t_{3, 4, 5} + t_{2, 3, 4}t_{1, 4, 5} - t_{1, 3, 4}t_{2, 4, 5},
\]
(notice that the condition $N_4=0$ is trivial for $n=5$). The condition $SN_3=0$ is given
by $Q_1=Q_2=0$ where
\[
 Q_1=t _{1, 2, 3} t _{1, 3, 4} t _{3, 4, 5}\quad\text{and}\quad Q_2=t _{1, 2, 3} t _{2, 3, 4} t _{3, 4, 5}.
\]
Then, since $P_1$ divides $Q_1$ and $Q_2$ we conclude that $\mathcal{N}_{5,4}\subset\mathcal{SN}_{5,3}$.

\smallskip

Case $n=6$.
The ideal $I_{6,4}$ is generated by the polynomials below. Those having degree 2
correspond to $J$ and the one's of degree 4 correspond to $N_4$.

\smallskip

\noindent
Degree 2:
\begin{align*}
& t_{1, 2, 3}t_{3, 4, 5}, \\
& t_{1, 3, 4}t_{4, 5, 6},  \\
& t_{2, 3, 4}t_{4, 5, 6},  \\
& t_{1, 2, 3}t_{3, 5, 6}+t_{1, 2, 4}t_{4, 5, 6},  \\
& t_{1, 2, 4}t_{3, 4, 5}+t_{2, 3, 4}t_{1, 4, 5}-t_{1, 3, 4}t_{2, 4, 5},  \\
& t_{1, 3, 5}t_{4, 5, 6}+t_{3, 4, 5}t_{1, 5, 6}-t_{1, 4, 5}t_{3, 5, 6},  \\
& t_{2, 3, 5}t_{4, 5, 6}+t_{3, 4, 5}t_{2, 5, 6}-t_{2, 4, 5}t_{3, 5, 6},  \\
& t_{1, 2, 3}t_{3, 4, 6}-t_{1, 2, 5}t_{4, 5, 6}-t_{2, 4, 5}t_{1, 5, 6}+t_{1, 4, 5}t_{2, 5, 6},  \\
& t_{1, 2, 4}t_{3, 4, 6}+t_{2, 3, 4}t_{1, 4, 6}-t_{1, 3, 4}t_{2, 4, 6}+t_{1, 2, 5}t_{3, 5, 6}+t_{2, 3, 5}t_{1, 5, 6}-t_{1, 3, 5}t_{2, 5, 6}
\end{align*}

\noindent
Degree 4:
\begin{align*}
t_{1, 2, 3}t_{1, 3, 4}t_{1, 4, 5}t_{1, 5, 6}, && t_{1, 2, 3}t_{2, 3, 4}t_{1, 4, 5}t_{1, 5, 6}, \\
t_{1, 2, 3}t_{1, 3, 4}t_{1, 4, 5}t_{2, 5, 6}, && t_{1, 2, 3}t_{2, 3, 4}t_{1, 4, 5}t_{2, 5, 6}, \\
t_{1, 2, 3}t_{1, 3, 4}t_{1, 4, 5}t_{3, 5, 6}, && t_{1, 2, 3}t_{2, 3, 4}t_{1, 4, 5}t_{3, 5, 6}, \\
t_{1, 2, 3}t_{1, 3, 4}t_{1, 4, 5}t_{4, 5, 6}, && t_{1, 2, 3}t_{2, 3, 4}t_{1, 4, 5}t_{4, 5, 6}, \\
t_{1, 2, 3}t_{1, 3, 4}t_{2, 4, 5}t_{1, 5, 6}, && t_{1, 2, 3}t_{2, 3, 4}t_{2, 4, 5}t_{1, 5, 6}, \\
t_{1, 2, 3}t_{1, 3, 4}t_{2, 4, 5}t_{2, 5, 6}, && t_{1, 2, 3}t_{2, 3, 4}t_{2, 4, 5}t_{2, 5, 6}, \\
t_{1, 2, 3}t_{1, 3, 4}t_{2, 4, 5}t_{3, 5, 6}, && t_{1, 2, 3}t_{2, 3, 4}t_{2, 4, 5}t_{3, 5, 6}, \\
t_{1, 2, 3}t_{1, 3, 4}t_{2, 4, 5}t_{4, 5, 6}, && t_{1, 2, 3}t_{2, 3, 4}t_{2, 4, 5}t_{4, 5, 6}, \\
t_{1, 2, 3}t_{1, 3, 4}t_{3, 4, 5}t_{1, 5, 6}, && t_{1, 2, 3}t_{2, 3, 4}t_{3, 4, 5}t_{1, 5, 6}, \\
t_{1, 2, 3}t_{1, 3, 4}t_{3, 4, 5}t_{2, 5, 6}, && t_{1, 2, 3}t_{2, 3, 4}t_{3, 4, 5}t_{2, 5, 6}, \\
t_{1, 2, 3}t_{1, 3, 4}t_{3, 4, 5}t_{3, 5, 6}, && t_{1, 2, 3}t_{2, 3, 4}t_{3, 4, 5}t_{3, 5, 6}, \\
t_{1, 2, 3}t_{1, 3, 4}t_{3, 4, 5}t_{4, 5, 6}, && t_{1, 2, 3}t_{2, 3, 4}t_{3, 4, 5}t_{4, 5, 6}.
\end{align*}
On the other hand, the condition $SN_3=0$ is $\{Q_i=0:i=1,\dots,14\}$ where the $Q_i$'s are the following polynomials of degree 3:
\begin{align*}
&Q_1 =t_{1, 2, 3}t_{1, 3, 4}t_{3, 4, 5}, \hspace{4.5cm}Q_2 =t_{1, 2, 3}t_{2, 3, 4}t_{3, 4, 5}, \\
&Q_3 =t_{1, 3, 4}t_{1, 4, 5}t_{4, 5, 6}, \hspace{4.5cm}Q_4 =t_{1, 3, 4}t_{2, 4, 5}t_{4, 5, 6},  \\
&Q_5 =t_{1, 3, 4}t_{3, 4, 5}t_{4, 5, 6}, \hspace{4.5cm}Q_6 =t_{1, 4, 5}t_{2, 3, 4}t_{4, 5, 6},  \\
&Q_7 =t_{2, 3, 4}t_{2, 4, 5}t_{4, 5, 6}, \hspace{4.5cm}Q_8 =t_{2, 3, 4}t_{3, 4, 5}t_{4, 5, 6},  \\
&Q_9 =(t_{1, 3, 4}t_{2, 3, 5} - t_{1, 3, 5}t_{2, 3, 4} ) t_{4, 5, 6},  \\
&Q_{10} =t_{1, 2, 3}t_{1, 4, 5}t_{3, 5, 6} + t_{1, 2, 4}t_{1, 4, 5}t_{4, 5, 6},  \\
&Q_{11} =t_{1, 2, 3}t_{2, 4, 5}t_{3, 5, 6} + t_{1, 2, 4}t_{2, 4, 5}t_{4, 5, 6},  \\
&Q_{12} =t_{1, 2, 3}t_{3, 4, 5}t_{3, 5, 6} + t_{1, 2, 4}t_{3, 4, 5}t_{4, 5, 6},  \\
&Q_{13} =t_{1, 2, 3}t_{1, 3, 4}t_{3, 4, 6} + t_{1, 2, 3}t_{1, 3, 5}t_{3, 5, 6} + t_{1, 2, 4}t_{1, 3, 5}t_{4, 5, 6} - t_{1, 2, 5}t_{1, 3, 4}t_{4, 5, 6}, \\
&Q_{14} =t_{1, 2, 3}t_{2, 3, 4}t_{3, 4, 6} + t_{1, 2, 3}t_{2, 3, 5}t_{3, 5, 6} + t_{1, 2, 4}t_{2, 3, 5}t_{4, 5, 6} - t_{1, 2, 5}t_{2, 3, 4}t_{4, 5, 6}
\end{align*}

\smallskip

\noindent
It turns out that $Q_i\in I_{6,4}$ for $i=1,\dots,12$, that $Q_{13},Q_{14}\not\in I_{6,4}$
but $Q_{13}^2,Q_{14}^2\in I_{6,4}$. This shows that $\mathcal{N}_{6,4}\subset\mathcal{SN}_{6,3}$.

\medskip

What it is surprising to us is that $Q_{13},Q_{14}\not\in I_{6,4}$. This shows that
$I_{6,4}$ is not a radical ideal. To prove this we assume in all the polynomials
that,
\[
 t_{1, 2, 4}= t_{1, 3, 4}= t_{1, 4, 5}= t_{1, 4, 6}=
t_{2, 3, 5}= t_{2, 5, 6}= t_{3, 4, 5}= t_{3, 5, 6}=t_{4, 5, 6}=0.
\]
Under this assumption, $I_{6,4}$ is generated by the polynomials,
\begin{equation}\label{eq.ideal_res}
t_{1, 2, 3}t_{2, 3, 4}t_{2, 4, 5}t_{1, 5, 6} \, \, \quad \text{and} \,\, \quad
t_{1, 2, 3}t_{3, 4, 6}-t_{2, 4, 5}t_{1, 5, 6},
\end{equation}
while  $Q_{14}$ becomes
\[
 t_{1, 2, 3}t_{2, 3, 4}t_{3, 4, 6}.
\]
A straightforward computation shows that $Q_{14}$ does not belong to the ideal generated
by the polynomials in \eqref{eq.ideal_res}. If instead of assuming $t_{1, 3, 4}= 0$ we
assume that $t_{2, 3, 4}=0$ we obtain that $Q_{13}\not\in I_{6,4}$.

\medskip

Case $n=7$. In analogy to the previous cases, we have verified with the software
\texttt{Singular} that:
\begin{enumerate}[-]
 \item
the square of the ideal defining $\mathcal{SN}_{7,3}$ is contained in $I_{7,4}$,
 \item
the square of the ideal defining $\mathcal{SN}_{7,5}$ is contained in $I_{7,6}$,
\item there are polynomials defining $\mathcal{SN}_{7,3}$ not belonging to $I_{7,4}$,
as well as polynomials defining $\mathcal{SN}_{7,5}$ not belonging to $I_{7,6}$.
\end{enumerate}
Again, $I_{7,4}$ and $I_{7,6}$ are not radical ideals for $n=7$.

\vspace{0.1cm}

This shows that, for $n\le7$, $SN_3(\mu)$ produces polynomials of degree 3 in the
radical of the ideal generated by  $J(\mu)$ and $N_4(\mu)$; and  $SN_5(\mu)$ produces
polynomials of degree 5 in the radical of the ideal generated by  $J(\mu)$ and $N_6(\mu)$.
This give rise to the following question:

\medskip

\noindent
\textbf{Questions.} Given $n>k$:
\begin{enumerate}[-]
\item Is the ideal $I_{n,k}$ radical?

\item Are there polynomials of degree less than $k$ in $\sqrt{I_{n,k}}$ which
are not in $I_{n,k}$?

\item Does $SN_{k-1}$ vanishes on $\calN_{n,k}$?
Or more generally, is there any other polynomial identity of degree less that $k$
(different from Jacobi) vanishing on $\calN_{n,k}$?
\end{enumerate}

\noindent
For example, in this section we have shown that
\begin{enumerate}[-]
 \item $SN_3$ vanishes in $\mathcal{N}_{n,4}$ for $n=5,6,7$ but does not for $n=8$;
 \item $SN_5$ vanishes in $\mathcal{N}_{7,6}$ but does not vanishes in $\mathcal{N}_{12,6}$.
 \end{enumerate}

Above questions are motivated by the idea of obtaining a polynomial $P(\mu)$, simpler
than $J(\mu)\oplus N_{k}$, to be used as (part of) the function $G$ in Theorem \ref{mainthm}
(or more precisely, in the display \eqref{eq.FG} of Theorem \ref{k-rigidity}) to study
rigidity in $\calN_{n,k}$.
This could help to recognize more easily rigid Lie algebras in $\mathcal{N}_{n,k}$.
This tool is used in \S\ref{sec.curves in dim7} to find rigid curves in
$\mathcal{N}_{7,k}$, for $k=3,6$.

\section{Deformations and rigidity in $\calN_{n,k}$, for $n=5,6,7$}
In this section we will consider several structure tables of Lie algebras.
In order to shorten the description of these tables we will denote by $ab$ the
Lie bracket $[a,b]$.
\subsection{A rigid Lie algebra with $H_{k-nil}^2(\g,\g)\ne0$.}
It is well known that a Lie algebra $\g$ may be rigid but fail to satisfy $H^2(\g,\g)=0$;
the examples are in general involved, see for instance \cite{R}. In this subsection
we present a Lie algebra $\g$ that is rigid in the variety $\mathcal{N}_{5,3}$  but $H_{3-nil}^2(\g,\g)\ne0$.

\vspace{0.1cm}

There are only eight non-abelian Lie algebras of dimension 5 over the real numbers \cite{dG}.
If $\mathfrak{f}_n$ denotes the standard filiform Lie algebra of dimension $n$,
these Lie algebras are

\begin{center}
\begin{tabular}{llc}
$k$    & $k$-step Lie algebra $\g$ & $\dim H_{k-nil}^2(\g, \g)$ \\
\hline \\[-2mm]
2 & $\mathfrak{f}_{3}\oplus\R^2$              & $11=20-9$ \\
       & $\g_{5,1}:\;a b  = e,\; c d  = e$. & $0=10-10$ \\
       & $\g_{5,2}:\;a b  = d,\; a c  = e$. & $0=12-12$ \\[2mm]
3 & $\mathfrak{f}_{4}\oplus\R$                                       & $4=18-14$ \\
       & $\g_{5,3}:\;a b  = d,\; a d  = e,\; b c  = e$. & $2=17-15$ \\
       & $\g_{5,4}:\;a b  = c,\; a c  = d,\; b c  = e$. & $0=15-15$ \\[2mm]
4 & $\mathfrak{f}_{5}$                                                       & $1=17-16$ \\
       & $\g_{5,6}:\;a b  = c,\; a c  = d,\; a d  = e$,\;  $b c  = e$. & $0=17-17$ \\[3mm]
\end{tabular}

The list of 5-dimensional non-abelian nilpotent Lie algebras
\end{center}

\medskip

\noindent
and the Hasse diagram is (see \cite{GO}),
\[
\xymatrix{
                                &\mathfrak{f}_{5}\ar[drr]                 &         \\
\g_{5,6} \ar[rr]\ar[drr]\ar[ur] && \g_{5,4}\ar[r]        &  \mathfrak{f}_{4}\oplus\R \ar[r]  &\g_{5,2} \ar[rr] && \mathfrak{f}_{3}\oplus\R^{2} \\
                                && \g_{5,3}\ar[rrr]\ar[ur] &&&\g_{5,1}  \ar[ur]
                                }
\]
This shows that $\g_{5,3}$ is rigid in the variety of 3-step nilpotent Lie algebras
but it tuns out that $H_{3-nil}^2(\g_{5,3}, \g_{5,3})=\text{span}\{\nu_1,\nu_2\}$ with
\begin{align*}
 \nu_1(b,c)&=c,  \\
  \nu_2(a,b)&=b, \qquad   \nu_2(a,c)=-c , \qquad    \nu_2(a,d)=-d.
\end{align*}
In fact, if $\mu$ is the bracket of $\g_{5,3}$, then $\mu+t\nu_1$ and $\mu+t\nu_2$ are solvable deformations of $\g_{5,3}$.

\subsection{Rigid nilpotent Lie algebras in $\mathcal{N}_{6,k}$}\label{sec.dim6}
The Hasse diagram of the 6-dimensional nilpotent Lie algebras is given in  \cite{Se}.
There are 34 real 6-dimensional nilpotent Lie algebras \cite{CdGS} and being this a finite number it follows,
as in dimension 5, that a Lie algebra is rigid in its class if and only if it is not a degeneration of any other in its class.
It follows from the Hasse diagram in \cite{Se} that there are:
one rigid Lie algebra in $\mathcal{N}_{6,5}$,
three (two over $\C$) rigid Lie algebras in $\mathcal{N}_{6,4}$,
four (two over $\C$) in $\mathcal{N}_{6,3}$ and
three (two over $\C$) in $\mathcal{N}_{6,2}$.
The following table summarizes this information.

\

\begin{center}
\begin{tabular}{lllc}
       & $k$-step Lie algebra  & $k$-step Lie algebra  &  \\
$k$    & $\g$ as denoted in \cite{Se}  & $\g$ as denoted in \cite{CdGS}  & $\dim H_{k-nil}^2(\g, \g)$ \\
\hline \\[-2mm]
2      & $36$     & $\g_{6,26}$           & $0=18-18$ \\
       & $13+13$  & $\g_{6,22}$, $t=-1,1$ & $0=20-20$ \\[2mm]
3      & $246_E$  & $\g_{6,24}$, $t=-1,1$ & $2=26-24$ \\
       & $136_A$  & $\g_{6,19}$, $t=-1,1$ & $0=25-25$ \\[2mm]
4      & $1246$   & $\g_{6,13}$,          & $1=27-26$ \\
       & $1346_C$ & $\g_{6,21}$, $t=-1,1$ & $0=26-26$ \\[2mm]
5      & $12346_E$  & $\g_{6,14}$           & $0=28-28$ \\[2mm]
\end{tabular}

The list of rigid 6-dimensional nilpotent Lie algebras in $\mathcal{N}_{6,k}$
\end{center}

\

In this case there are three rigid nilpotent Lie algebras with non-zero cohomology $H_{k-nil}^2(\g, \g)$,
these are $\g_{6,13}$ and $\g_{6,24}$, $t=-1,1$;
and the non-zero cohomology classes correspond to infinitesimal solvable deformations.

\subsection{The two rigid curves in $\mathcal{N}_{7}$}\label{sec.curves in dim7}
It is known that there are no rigid Lie algebras in $\mathcal{N}_{7}$ (see \cite{Ca})
and that there are only three (two over $\C$) rigid curves of non-isomorphic Lie algebras
in $\mathcal{N}_{7}$ (see \cite{GA}).

One of these curves consists of 6-step nilpotent Lie algebras and it is denoted as
$\g_{I}(\alpha)$ by Burde \cite{Bu}, as $\g_{7,1.1}(ii_\lambda)$ by Magnin \cite{Ma}
and as $123457_I$ by Seeley \cite{Se2}. If $\g_6(r,t)$ is the surface of (solvable) Lie algebras
given by the following structure table
\begin{multline*}
\g_6(r, t):\;  ab = c,\;  ac = d,\;  ad = e,\;  ae = f,\;  af = g,\;  ag = rg, \\
bc = e,\;  bd = f,\;  be = rtf+(1-t)g,\;  bf = rg,\;  bg = r^2g,  \\ cd = -rtf+tg,
\end{multline*}
then   $\g_6(0,\alpha)$ is exactly $\g_{I}(\alpha)$.

The other two curves consist of 5-step nilpotent Lie algebras and they coincide over $\C$.
Over the complex numbers, this curve is denoted
as $\g_{1}(\lambda)$ by Burde \cite{Bu},
as $\g_{7,0.4}(\lambda)$ by Magnin \cite{Ma} and
as $12457_N$ by Seeley \cite{Se2}.
The structure table of $\g_{1}(\lambda)$ is obtained by setting $(r,t)=(0,\lambda)$ in
the following surface of solvable Lie algebras,
\begin{multline*}
\g_5(r,t):\;  ab=(1+tr)c,\;  ac=d,\;  ad=f+tg,\;  ae=g,\;  af=-rf+g,\; \\
            bc=e,\;    bd=g,\;  be=rd+f,\;   ce=g.
\end{multline*}

It is easy to check that both surfaces, $\{\g_5(r, t)\}_{(r,t)\in\R^2}$ and
$\{\g_6(r, t)\}_{(r,t)\in\R^2}$, are contained in $\mathcal{SN}_{7,5}$
(see \S\ref{sec.radical} for the definition). In addition, a Lie algebra in either
of these curves is nilpotent if and only if $r=0$.

Therefore if $\mu(r,t)$ is the Lie algebra structure of either $\g_5(r,t)$ or $\g_6(r,t)$
we consider the following 3-term $C^{\infty}$-chain complex
\begin{equation*}
\R^2\times GL(\g)\xrightarrow{\;\;F\;\;}\Lambda^2\g^* \otimes\g \xrightarrow{\;\;G=J\oplus SN_5\;\;}\Big(\Lambda^3\g^*\otimes\g\Big)
\oplus\Big((\g^*)^{\otimes 6}\otimes\g\Big),
\end{equation*}
where $SN_5$ is as in \eqref{SN_k} and $F(r,t,g)=g\cdot\mu(r,t)$.

The corresponding linear chain complex of the tangent spaces at the points $(r_0,t_0,I)\in \R^2\times GL(\g)$ and $\mu(r_0,t_0)\in \Lambda^2\g^* \otimes\g$ is
\begin{equation}\label{eq.exact_curves}
\R^2\times \g^* \otimes\g\xrightarrow{dF|_{(r_0,t_0,I)}}\Lambda^2\g^* \otimes\g \xrightarrow{dG|_{\mu(r_0,t_0)} }\Big(\Lambda^3\g^*\otimes\g\Big)
\oplus\Big((\g^*)^{\otimes 6}\otimes\g\Big)
\end{equation}
where
\begin{align*}
dF|_{(r_0,t_0,I)} &= [\partial_r|_{(r_0,t_0)}\mu,\;\partial_t|_{(r_0,t_0)}\mu,\;d_{\mu(r_0,t_0)}^1], \\
dG|_{\mu(r_0,t_0)} &= \left[\begin{matrix}d_{\mu(r_0,t_0)}^2 \\[2mm] d(SN_5)|_{\mu(r_0,t_0)}\end{matrix}\right].
\end{align*}
A computer calculation shows that the chain complex \eqref{eq.exact_curves} is exact
for all $(r_0,t_0)$ with $r_0t_0\ne -1$, if $\mu(r_0,t_0)$ is the structure of $\g_5(r_0,t_0)$;
and for all  $(r_0,t_0)$ with $t_0\ne 0$, if $\mu(r_0,t_0)$ is the structure of $\g_6(r_0,t_0)$.

\vspace{0.1cm}

The sizes of  $dF|_{(r_0,t_0,I)}$ and $dG|_{\mu(r_0,t_0)}$ are, respectively, $147\times 51$
and $7^7\times 147$. Writing down a computer code to obtain these matrices and confirm the
above claims is not such a difficult task. Doing this by hand would be a huge effort,
yet not impossible. For this, it could be convenient to use explicit ordinary cohomology
classes in $H^2(\mu(r_0,t_0),\mu(r_0,t_0))$ which can be found in \cite{Ma} (if $\mu(r_0,t_0)$ is
generic in either curve, then $\dim H^2(\mu(r_0,t_0),\mu(r_0,t_0))=9$). Then one should show that $d(SN_5)|_{\mu(r_0,t_0)}$ has no non-trivial kernel
within the space generated by these clases and the two tangent vectors corresponding to $\R^2$.

\vspace{0.1cm}

Next we use the exactness of \eqref{eq.exact_curves} to obtain the following proposition.

\begin{proposition}\label{prop.rigid_curve}
 The curves $\{\g_{1}(\lambda):\lambda\in\R\}$ and $\{\g_{I}(\alpha):\alpha\in\R,\;\alpha\ne 0\}$  are rigid curves
 in $\mathcal{N}_{7}$.
 Moreover, for any $r_0\ne0$, the curves
 $\{\g_5(r_0,t):t\in\R,\;t\ne -1/r_0\}$ and $\{\g_6(r_0,t):t\in\R,\;t\ne 0\}$ are rigid curves
 in $\mathcal{SN}_{7,5}$.
\end{proposition}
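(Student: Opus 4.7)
The plan is to apply Theorem \ref{mainthm} to the three-term $C^{\infty}$-chain complex
\[
\R^2\times GL(\g)\xrightarrow{F}\Lambda^2\g^*\otimes\g\xrightarrow{G=J\oplus SN_5}\bigl(\Lambda^3\g^*\otimes\g\bigr)\oplus\bigl((\g^*)^{\otimes 6}\otimes\g\bigr),
\]
with $F(r,t,g)=g\cdot\mu(r,t)$ and $\mu(r,t)$ the structure of either $\g_5(r,t)$ or $\g_6(r,t)$. Since both surfaces lie inside $\mathcal{SN}_{7,5}$, the composition $G\circ F$ vanishes identically, and the exactness of \eqref{eq.exact_curves} at the $(r_0,t_0,I)$ and $\mu(r_0,t_0)$ covered by the proposition provides the hypothesis of Theorem \ref{mainthm}. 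The theorem then yields a neighborhood $W$ of $\mu(r_0,t_0)$ and a smooth $H:W\to\R^2\times GL(\g)$, written $H(\lambda)=(r(\lambda),t(\lambda),g(\lambda))$, satisfying $g(\lambda)\cdot\mu(r(\lambda),t(\lambda))=\lambda$ for every $\lambda\in W\cap\mathcal{SN}_{7,5}$.

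For the rigidity of the two nilpotent curves in $\mathcal{N}_7$ I would specialize to $r_0=0$. The inclusion $\mathcal{N}_7=\mathcal{N}_{7,6}\subset\mathcal{SN}_{7,5}$ proved in \S\ref{sec.radical} ensures that every $\lambda\in W\cap\mathcal{N}_7$ lies in $W\cap\mathcal{SN}_{7,5}$, so $\lambda\cong\mu(r(\lambda),t(\lambda))$ with $(r(\lambda),t(\lambda))$ close to $(0,t_0)$. Since $\lambda$ is nilpotent and $\mu(r,t)$ is nilpotent iff $r=0$ (noted just before the proposition), this forces $r(\lambda)=0$, so $\lambda\cong\mu(0,t(\lambda))$, a point on the corresponding curve $\{\g_1(\lambda)\}$ or $\{\g_I(\alpha)\}$. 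Hence the $GL(\g)$-orbit of the curve contains a full neighborhood of $\mu(0,t_0)$ in $\mathcal{N}_7$, which is exactly the desired rigidity.

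For $r_0\ne0$ the same $H$ already shows that every $\lambda\in W\cap\mathcal{SN}_{7,5}$ is isomorphic to $\mu(r(\lambda),t(\lambda))$ with $(r(\lambda),t(\lambda))$ close to $(r_0,t_0)$, a point of the full 2-parameter family. Promoting this surface-level statement to the curve rigidity asserted by the proposition is the main obstacle of my plan: I would supplement the argument by exhibiting an explicit one-parameter group of rescalings showing that $\g_5(r_1,t_1)\cong\g_5(r_0,t_1')$ whenever $r_1$ is sufficiently close to $r_0\ne0$, and analogously for $\g_6$. A scaling of the generators $a$ and $b$ by suitable powers of a real parameter, compensated on the remaining basis vectors so as to preserve the eigenvalue relation $af=-rf+g$ with $r=r_0$, should absorb the $r$-perturbation into a reparametrization of $t$; composed with $g(\lambda)$, this places $\lambda$ in the $GL(\g)$-orbit of a point of the chosen curve. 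Apart from this explicit rescaling step, the entire argument is a direct application of Theorem \ref{mainthm} to the exact sequence \eqref{eq.exact_curves}, combined with the inclusion $\mathcal{N}_{7,6}\subset\mathcal{SN}_{7,5}$ and the nilpotency criterion $r=0$ in the nilpotent case.
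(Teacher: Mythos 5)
Your argument for the first half of the proposition (the nilpotent curves $\{\g_{1}(\lambda)\}$ and $\{\g_{I}(\alpha)\}$ in $\mathcal{N}_{7}$) is exactly the paper's: apply Theorem \ref{mainthm} to the two-parameter family with $G=J\oplus SN_5$, use $\mathcal{N}_{7}\subset\mathcal{SN}_{7,5}$, and force $r(\lambda)=0$ from nilpotency. That part is correct.

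The second half, however, has a genuine gap, and you have correctly located it yourself: the exactness of \eqref{eq.exact_curves} for the two-parameter $F$ only shows that the \emph{surface} $\{\mu(r,t)\}$ is rigid in $\mathcal{SN}_{7,5}$, and passing to the curve $\{\mu(r_0,t):t\}$ requires knowing that $\mu(r_1,t_1)$ with $r_1$ near $r_0$ is isomorphic to some $\mu(r_0,t_1')$. Your proposed repair, a rescaling of the generators compensated on the remaining basis vectors, does not work for $\g_5(r,t)$: if you scale $a,b,\dots,g$ by $\alpha,\beta,\dots,\psi$, the relations $ac=d$, $ad=f+tg$, $ae=g$, $af=-rf+g$, $bc=e$, $be=rd+f$, $ce=g$ force $\phi=\epsilon$, $\beta=1$, $\gamma=\epsilon$, $\psi=\epsilon^2=\delta$, $\alpha=\epsilon$ and then $\phi=\alpha\delta=\epsilon^3$, whence $\epsilon^2=1$ and $r'=\alpha r=\pm r$; so diagonal rescalings cannot move $r$ off $\pm r_0$. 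Whether the orbits of the curves with distinct $r$ intersect at all is precisely what the paper's remark following the proposition leaves open, so you cannot take it for granted. The paper avoids the issue entirely: for $r_0\ne 0$ it \emph{freezes} the variable $r$ at $r_0$, i.e.\ replaces the domain of $F$ by $\{r_0\}\times\R\times GL(\g)$ (dropping the column $\partial_r\mu$ from $dF$), and verifies by a separate computer calculation that the resulting smaller linear sequence is still exact at the stated $(r_0,t_0)$. Theorem \ref{mainthm} then directly produces, for every nearby $\nu\in\mathcal{SN}_{7,5}$, a single parameter $t$ with $\nu\simeq\mu(r_0,t)$, which is the curve rigidity claimed. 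To close your gap you would need either this additional exactness verification or an honest non-diagonal isomorphism $\g_5(r_1,t_1)\simeq\g_5(r_0,t_1')$, neither of which your sketch supplies.
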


\begin{proof} Recall that $\g_6(0,t)\simeq\g_{I}(t)$ and $\g_5(0,t)\simeq\g_{1}(t)$. Fix $t_0\in\R$
(and $t_0\ne0$ if $\mu(0,t_0)$ is the structure of $\g_6(0,t_0)$).
It follows from Theorem \ref{mainthm} and the exactness of \eqref{eq.exact_curves} that there is a
neighborhood $U\subset\mathcal{SN}_{7,5}$ of $\mu(0,t_0)$ such that for any Lie algebra structure $\nu\in U$
there exists $(r,t)\in\R^2$ such that $\nu\simeq\mu(r,t)$.
If in addition $\nu$ is nilpotent (that is $\nu\in U\cap\mathcal{N}_{7}$), then $r$ must be 0 as $\mu(r,t)$ is
nilpotent if and only if $r=0$. This proves that $\{\g_{I}(\alpha):\alpha\in\R,\;\alpha\ne 0\}$ and $\{\g_{1}(\lambda):\lambda\in\R\}$ are rigid curves in $\mathcal{N}_{7}$.

\smallskip

Now fix $r_0\ne0$, and $t_0\ne -1/r_0$ if $\mu(r_0,t_0)$ is the structure of $\g_5(r_0,t_0)$;
or $t_0\ne0$ if $\mu(r_0,t_0)$ is the structure of $\g_6(r_0,t_0)$.
   It tuns out that a computer calculation shows that
   \eqref{eq.exact_curves} is still exact if we consider the function $F$ (and its differential)
 with the variable $r$ fixed at $r=r_0$.
Now Theorem \ref{mainthm} implies that there is a
neighborhood $U\subset\mathcal{SN}_{7,5}$ of $\mu(r_0,t_0)$ such that for any Lie algebra structure $\nu\in U$
there exists $t\in\R$ such that $\nu\simeq\mu(r_0,t)$.
This proves that  $\{\g_5(r_0,t):t\in\R,\;t\ne -1/r_0\}$ and $\{\g_6(r_0,t):t\in\R,\;t\ne 0\}$  are rigid curves
 in $\mathcal{SN}_{7,5}$.
\end{proof}

\begin{remark}
We point out that even when $r_1\ne r_2$, the orbit of the curve  $\{\mu(r_1,t):t\in\R\}$
might have non-empty intersection with the orbit of  $\{\mu(r_2,t):t\in\R\}$.
Therefore, even though the exactness of \eqref{eq.exact_curves} implies that the set $\{\mu(r,t):r,t\in\R\}$
is rigid in $\mathcal{SN}_{7,5}$,
this set does not constitute a rigid
 surface of \emph{pairwise non-isomorphic} Lie algebras in $\mathcal{SN}_{7,5}$.
\end{remark}

\subsection{Deformations and rigidity in $\mathcal{N}_{7,3}$}\label{sec.Nil_7-3}
The goal of this subsection is to obtain all rigid points and curves in $\mathcal{N}_{7,3}$.
According to the classification, there are more than fifty isomorphism classes and two 1-parameter curves. Specifically, in this subsection we do the following:
\begin{enumerate}[(a)]
\item With the assistance of a computer, we obtain all Lie algebras
$\g\in \mathcal{N}_{7,3}$ such that $H_{3-nil}^2(\g, \g)=0$ (there are four such
Lie algebras). According to Theorem \ref{k-rigidity} these Lie algebras are rigid in $\mathcal{N}_{7,3}$.

\smallskip

\item Similarly,  we obtain that $\dim H_{3-nil}^2(\g, \g)=1$ for $\g$ a generic point
in either one of the 1-parameter curves. The same argument given in the proof of Proposition \ref{prop.rigid_curve} shows that these curves are rigid in $\mathcal{N}_{7,3}$.

 \smallskip

 \item We also determine all the Lie algebras $\g\in \mathcal{N}_{7,3}$ that are not in
 the 1-parameter curves and have $\dim H_{3-nil}^2(\g, \g)=1$. For these Lie algebras we explicitly show a non-trivial deformation in $\mathcal{N}_{7,3}$
 (showing that they are not rigid).

 \smallskip

 \item As a byproduct, we point out a possible error in in Proposition 3.7 of \cite{GR}
 (see item (3) below).
\end{enumerate}
As a consequence of this, it is very likely that the four Lie algebras obtained in (a)
are exactly the rigid points in $\mathcal{N}_{7,3}$. In order to complete the proof of this,
one should consider the rigidity of all Lie algebras $\g\in \mathcal{N}_{7,3}$ such that
$\dim H_{3-nil}^2(\g, \g)\ge 2$ (approximately 40). We think that all of them have
non-trivial deformations in $\mathcal{N}_{7,3}$.

\smallskip

We will follow the classification of the 7-dimensional nilpotent Lie
algebras over $\R$ given by Gong in \cite{Go} and the one given by Seeley in \cite{Se2}.
The classification of Gong corrects some errors in the list given by Seeley. A more recent
classification is given by Magnin in \cite{Ma} (see also \cite{Ca}) but we will follow the classification of \cite{Go} and \cite{Se2} since these authors list the Lie algebras by
their upper central series (in \cite{Ma} the Lie algebras are listed by rank).
The list of 3-step nilpotent Lie algebras of dimension 7 in \cite{Go} has
52 isolated real Lie algebras and two 1-parameter families of pairwise non-isomorphic
nilpotent Lie algebras.

\medskip

\noindent
\emph{Rigid points and curves in $\mathcal{N}_{7,3}$.}
There are four  3-step nilpotent Lie algebras $\g$ with $H_{3-nil}^2(\g, \g)=0$ and thus
they are rigid in $\mathcal{N}_{7,3}$.
They are
\[
\g_{137B},\quad \g_{137B_1},\quad  \g_{247H},\quad \g_{247H_1}
\]
($\g_{137B}\simeq\g_{137B_1}$ and $\g_{247H}\simeq \g_{247H_1}$ over $\C$ \cite{Go}) and
the dimension of their orbits are, respectively, 36, 36, 38, 38.

\smallskip

The two 1-parameter families  in $\mathcal{N}_{7,3}$ are
\[
 \g_{147E}(t)\quad\text{ and }\quad \g_{147E_1}(t),\quad\text{ with $t>1$}
\]
(over $\C$, if $t=\cosh(\theta)>1$ then $\g_{147E_1}(t)$ is isomorphic to $\g_{147E}(t')$ with
$t'= -\frac{(1-i\sinh(\theta))^2}{\cosh^2(\theta)}\in\C$).
It turns out that, if
\[
 \g(t) \text{ is either }
 \begin{cases}
  \g_{147E_1}(t) & \text{with $t>1$, or} \\[2mm]
  \g_{147E}(t) & \text{with $t>1$, $t\ne2$,}
 \end{cases}
\]
then $\dim H_{3-nil}^2\big(\g(t), \g(t)\big)=1$
(and $\dim H_{3-nil}^2\big(\g_{147E}(2), \g_{147E}(2)\big)=3$),
and the non-zero cohomology class corresponds to the tangent vector
of $\g(t)$. Therefore, the same argument given in the proof of
Proposition \ref{prop.rigid_curve}, proves that
\[
\{\g_{147E}(t):1<t<2\}\qquad
\{\g_{147E}(t):2<t\}\qquad
\{\g_{147E_1}(t):1<t\}
\]
are rigid  curves in $\mathcal{N}_{7,3}$.

\medskip

\noindent
\emph{Non-trivial deformations of all $\g\in\mathcal{N}_{7,3}$ with $\dim H_{3-nil}^2\big(\g, \g\big)=1$.}
There are six other 3-step nilpotent Lie algebras $\g$ of dimension 7 in \cite{Go} (not members of the previous curves) such that $\dim H_{3-nil}^2\big(\g, \g\big)=1$. They are,
\[
 \g_{247G},\quad  \g_{247K},\quad \g_{147D},\quad \g_{137A},\quad \g_{137D},\quad \g_{137A_1}.
\]
None of them is rigid, in fact we claim that
\begin{align*}
 \g_{247H} & \rightarrow \g_{247K} & \text{ (see item \ref{it.Goze})}\\[-2mm]
           &  \;\begin{matrix}\searrow \\[3mm]  \end{matrix}\; \g_{247G}& \text{ (see item \ref{it.deg_GH})}\\
 \g_{137B} & \rightarrow \g_{137A}  & \text{ (see item \ref{it.137BD})} \\[-2mm]
           & \;\begin{matrix}\searrow \\[3mm]  \end{matrix}\;  \g_{137D}   & \text{ (see item \ref{it.137BD})} \\
 \g_{137B_1} & \rightarrow \g_{137A_1}  & \text{ (see item \ref{it.137BD})} \\
 \g_{147E_1}(t) & \rightarrow \g_{147D},\text{ as $t\to1$.} & \text{ (see item \ref{it.147E1D})}
\end{align*}

Next we prove these statements and point out an error that occur in \cite{GR}.

\vspace{0.1cm}

\begin{enumerate}[(1)]
\item \label{it.137BD}
It is not difficult to see that $\g_{137B}  \rightarrow \g_{137A}$ and $\g_{137B_1}  \rightarrow \g_{137A_1}$ since
\begin{align*}
\g_{137A}: & \; ab=e,\;  ae=g,\;  cd=f,\;  cf=g; \\
\g_{137B}: &  \; ab=e,\;  ae=g,\;  cd=f,\;  cf=g,\;  bd=g; \\
\g_{137A_1}: & \; ac=e,\;  ad=f,\;  ae=g,\;  bc=-f,\;  bd=e,\;  bf=g;\\
\g_{137B_1}: & \; ac=e,\;  ad=f,\;  ae=g,\;  bc=-f,\;  bd=e,\;  bf=g,\;  cd=g.
\end{align*}
In addition, let
\[
 \g(t): \; ab=e,\; ad=f,\; af=g,\; bc=f,\; bd=g,\; cd =-t^2e,\; ce=-g.
\]
If we rewrite the structure table of $\g(t)$ in the basis
\[
\{ ta+c,\; 2t(tb-d),\; -ta+c,\; -2t(tb+d),\; 4t^2(te-f),\; 4t^2(te+f),\; -8t^3g \}
\]
we obtain the structure table of $\g_{137B}$.
Since $\g(0)\simeq \g_{137D}$ (same structure table), it follows that $\g_{137B}  \rightarrow \g_{137D}$.

\medskip

\item \label{it.147E1D} The structure table of $\g_{147E_1}(t)$ is
\begin{multline*}
 \g_{147E_1}(t): \; ab=d,\;ac=-f,\;af=-tg, \\ bc=e,\;be=tg,\;bf=2g,\;cd=-2g.
\end{multline*}
If we set $t=1$ and rewrite this table in the basis
\[
\{ -a,\; a+b,\; c,\; -d,\;  e-f,\;  -f,\;  -g \}
\]
we obtain the structure table of  $\g_{147D}$
\[
 \g_{147D}: \; ab=d, \;ac=-f, \; ae= g, \; af=g, \; bc=e, \; bf=g, \;  cd=-2g.
\]

\medskip

\item \label{it.deg_GH} The structure table of  $\g_{247G}$ is that of $\g(0)$ where,
\begin{align*}
 \g(t):\;& ab = d,\; ac = e,\; ad = (1+\tfrac{t^3}{2})f+\tfrac{t^3}{2}g,\; ae = (1-\tfrac{t^3}{2})f-\tfrac{t^3}{2}g,\\
         & bd = f,\; be = g,\; cd = g,\; ce = f.
\end{align*}
On the other hand, if we rewrite the structure table of
\[
 \g_{247H}:\;ab=d,\; ac=e,\; ad=f,\; bd=f,\; be=g,\; cd=g,\; ce=f
\]
in the basis
\begin{multline*}
 \{
 2t^2a+(\tfrac12-t^2)b+\tfrac12 c,\;  \tfrac12(1+t)b+\tfrac12(1-t)c,\;  \tfrac12(1-t)b+\tfrac12(1+t)c, \\
 t^2(1+t)d+t^2(1-t)e,\; t^2(1-t)d+t^2(1+t)e, \\
 t^2(1+t^2)f+t^2(1-t^2)g,\;t^2(1-t^2)f+t^2(1+t^2)g
 \},
\end{multline*}
it coincides with the table of $\g(t)$. This shows that  $\g(t)\simeq \g_{247H}$ for $t\ne0$.

\medskip

 \item \label{it.Goze} In \cite{GR} it is claimed that $\g_{247K}$ is rigid
 in $\mathcal{N}_{7,3}$. Next, we will show that this is not the case.
 The Lie algebra claimed to be rigid in \cite{GR} is the following,
 \[
  \g: \; ab=c,\; ac=d,\; ae=f,\; af=g,\; bc=d,\; be=f,\; ce=g,\; ef=d.
 \]
Now, via the change of basis given by $\{b,-a+b,e,c,f,d,-g\}$ the structure table of $\g$
becomes the same as that of $g_{247K}$,
\[
 \g_{247K}: \; a b = d,\;  a c = e,\;  a d = f,\;  b e = g,\;  c d = g,\;  c e = f.
\]
Moreover, $\g_{247K}$ is $\g(0)$ for the curve of Lie algebras,
\[
 \g(t):\;a b = d,\;  a c = e,\;  a d = f,\;  b c = t^2e,\;  b e = g,\;  c d = g,\;  c e = f.
\]
On the other hand, the structure table of $\g_{247H}$ (which is rigid) is
\[
 \g_{247H}: \; a b = d,\;  a c = e,\;  a d = f,\;  b e = g,\;  b d = f,\;  c d = g,\;  c e = f
\]
and with respect to the basis given by
\[
\{-ia,-it^2(a-b),\;tc,\;t^2d,\; -ite,\; -it^2f,\;t^3g\},
\]
yields the structure table of $\g(t)$, showing that $\g(t)\simeq \g_{247H}$ over $\C$ for all $t\ne0$. Since $\g_{247H}\not\simeq \g_{247K}$ we obtain that the Lie algebra
$\g_{247K}$ is not rigid. This also shows that $\g_{247H} \rightarrow \g_{247K}$ as we
wanted to prove.

\end{enumerate}

\newcommand\bibit[5]{\bibitem[#1]
{#2}#3, {\em #4,\!\! } #5}

\end{document}